\numberwithin{equation}{section}
\newtheorem{theorem}{Theorem}[section]
\newtheorem{lemma}[theorem]{Lemma}
\newtheorem{proposition}[theorem]{Proposition}
\newtheorem{corollary}[theorem]{Corollary}
\theoremstyle{definition}
\theoremstyle{remark}
\newtheorem{remark}[theorem]{Remark}
\newtheorem{example}[theorem]{Example}
\newtheorem{question}[theorem]{Question}
\newcommand{\fgrade}{\operatorname{fgrade}}
\newcommand{\Ht}{\operatorname{ht}}
\newcommand{\Hom}{\operatorname{Hom}}
\newcommand{\Rad}{\operatorname{Rad}}
\newcommand{\depth}{\operatorname{depth}}
\newcommand{\cd}{\operatorname{cd}}
\newcommand{\vpl}{\operatornamewithlimits{\varprojlim}}
\newcommand{\fm}{\frak{m}}
\newcommand{\fp}{\frak{p}}
\newcommand{\fa}{\frak{a}}
\newcommand{\fb}{\frak{b}}
\newcommand{\fc}{\frak{c}}
\newcommand{\fx}{\underline{x}}
\newcommand{\fn}{\frak{n}}
\begin{document}

\author[Eghbali]{M. Eghbali \   }
\author[Shirmohammadi]{\   N. Shirmohammadi }

\title[On cohomological dimension and depth under linkage]
{On cohomological dimension and depth under linkage}

\address{School of Mathematics, Institute for Research in Fundamental Sciences (IPM), P. O. Box: 19395-5746, Tehran-Iran}
\email{m.eghbali@yahoo.com}

\address{Department of Mathematics, University of Tabriz, Tabriz, Iran. }
\email{shirmohammadi@tabrizu.ac.ir}

\thanks{The first author is supported in part by a grant from IPM (No. 91130026)}.

\subjclass[2000]{13D45, 13C14.}

\keywords{Cohomological dimension, depth, linkage, formal grade.}

\begin{abstract}
Some relations between cohomological dimensions and depths of linked
ideals are investigated and discussed by various examples.
\end{abstract}

\maketitle

\section{Introduction}

Throughout this paper, we assume that $R$ is a commutative
Noetherian ring and $\fa$ is an ideal of $R$. Let $M$ be an
$R$-module. For an integer $i \in \mathbb {Z}$, let $H^i_{\fa}(M)$
denote the $i$th local cohomology module of $M$ with respect to
$\fa$ as introduced by Grothendieck (cf. \cite{Gr} and
\cite{Br-Sh}).

One of the most interested subjects in commutative algebra and
related topics is the cohomological dimension of an ideal. The
cohomological dimension of $\fa$ in $R$, $\cd(\fa,R)$ is defined as
 $$\cd(\fa,R)=\min\{i :  H^j_{\fa}(R) = 0 \text{\ for all\ } j > i\}.$$
In the light of Grothendieck's Vanishing Theorem, $\cd(\fa,R) \leq
\dim R$ and the equality happens when $(R,\fm)$ is a local ring and
$\fa$ is an $\fm$-primary ideal. The Hartshorne-Lichtenbaum
Vanishing Theorem provides conditions for $\cd(\fa,R) \leq \dim
R-1$, where $\fa$ is not an $\fm$-primary ideal. The cohomological
dimension has been studied by several authors; see for instance
\cite{F}, \cite{Ha}, \cite{O}, \cite{Ha-Sp}, \cite{Hu-Lyu},
\cite{D-N-T} and \cite{Sch2}.

Let $(R,\fm)$ be a Gorenstein local ring. Two ideals $\fa$ and $\fb$
of $R$ are linked by a Gorenstein ideal $\fc$ if $\fa=\fc:\fb$ and
$\fb=\fc:\fa$. Recall that an ideal $\fc$ is a Gorenstein ideal if
$R/\fc$ is a Gorenstein ring. From now on we mean $\fa$ and $\fb$
are linked when they are linked by a Gorenstein ideal. Iterating the
procedure, we say that an ideal $\fb$ is in the linkage class of an
ideal $\fa$ if there are ideals $\fc_1,\ldots,\fc_n$ such that $\fb$
is linked to $\fc_1$, $\fc_1$ is linked to $\fc_2$, ..., and $\fc_n$
is linked to $\fa$. If, in addition, $n$ is odd, we say that $\fb$
is in the even linkage class of $\fa$, that is $\fb$ is evenly
linked to $\fa$. In case $\fa$ is an unmixed ideal and
$\fc\subset\fa$ is a Gorenstein ideal with $\Ht\fa=\Ht\fc$, then
$\fa$ and $\fc:\fa$ are linked by $\fc$ (cf. \cite[Proposition
2.2]{Sch}).

Below, we recall some of the known results about the linkage that
will be used in the course of the paper. We refer the reader to
\cite{Sch} or \cite{St-V}.

\begin{lemma}\label{Schenzel}
Suppose that $(R,\fm)$ is a Gorenstein local ring, and $\fa$ and
$\fb$ are two linked ideals of $R$ by a Gorenstein ideal $\fc$.
\begin{enumerate}
  \item[(a)] There exists a short exact sequence
  $0\longrightarrow K_{R/\fa}\longrightarrow R/\fc\longrightarrow
R/\fb\longrightarrow0$, where $K_{R/\fa}$ denotes the canonical
module of $R/\fa$.
  \item[(b)] If $R/\fa$ is a Cohen-Macaulay ring, then so is $R/\fb$.
\end{enumerate}
\end{lemma}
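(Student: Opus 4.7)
The plan is to obtain (a) by identifying the kernel of the natural surjection $R/\fc \twoheadrightarrow R/\fb$ with the canonical module $K_{R/\fa}$, and then to leverage the resulting exact sequence together with Gorenstein duality over $R/\fc$ to derive (b).

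For part (a), first observe that $\fc \subseteq \fb$, so the tautological map $R/\fc \to R/\fb$ is surjective with kernel $\fb/\fc$. The linkage relation $\fb = \fc :_R \fa$ gives a canonical isomorphism
\[
\fb/\fc \;\cong\; \Hom_{R/\fc}(R/\fa,\,R/\fc),
\]
sending $b+\fc$ to multiplication by $b$. Write $S := R/\fc$; this is a Gorenstein local ring of dimension $d := \dim R - \Ht\fc$. Since $\Ht\fa = \Ht\fc$ in the linkage setup, $\dim_S R/\fa = d$, and for a finitely generated $S$-module $M$ of full dimension $d$ over a Gorenstein ring one has $K_M = \Hom_S(M, S)$ (because $K_S = S$ and $K_M = \Ext^{d-\dim M}_S(M, K_S)$). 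Applied to $M = R/\fa$, this identifies $\fb/\fc$ with $K_{R/\fa}$ and produces the short exact sequence of (a).

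For (b), assume $R/\fa$ is Cohen-Macaulay, so $K_{R/\fa}$ is also Cohen-Macaulay of dimension $d$, hence a maximal Cohen-Macaulay $S$-module. Over the Gorenstein ring $S$ this forces $\Ext^i_S(K_{R/\fa},\,S) = 0$ for all $i > 0$, while Gorenstein biduality yields $\Hom_S(K_{R/\fa},\,S) = K_{K_{R/\fa}} = R/\fa$. Applying $\Hom_S(-,\,S)$ to the sequence of (a), the initial segment
\[
0 \lo \Hom_S(R/\fb,\,S) \lo S \lo R/\fa \lo \Ext^1_S(R/\fb,\,S) \lo 0
\]
recovers the tautological surjection $S \twoheadrightarrow R/\fa$, forcing $\Ext^1_S(R/\fb,\,S) = 0$. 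For $i \geq 2$, the long exact sequence gives $\Ext^i_S(R/\fb,\,S) \cong \Ext^{i-1}_S(K_{R/\fa},\,S) = 0$. Therefore $R/\fb$ is maximal Cohen-Macaulay over the Gorenstein ring $S$, and since depth is unchanged upon viewing $R/\fb$ as an $R$-module, it is Cohen-Macaulay.

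The main obstacle is really only conceptual bookkeeping: verifying that $\Ht\fa = \Ht\fb = \Ht\fc$ so that all three modules have dimension $d$, and correctly invoking the biduality $K_{K_{R/\fa}} = R/\fa$, which is precisely where the Cohen-Macaulay hypothesis on $R/\fa$ is used. Once those are in place, both parts reduce to a direct manipulation of a single short exact sequence.
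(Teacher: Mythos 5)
Your proof is correct, but note that the paper itself contains no proof of this lemma: it is stated as a known result recalled from Schenzel \cite{Sch} and St\"{u}ckrad--Vogel \cite{St-V}, so the comparison is really with the classical argument there --- which your proposal essentially reproduces. The identification $\fb/\fc=(\fc:\fa)/\fc\cong\Hom_{S}(R/\fa,S)$ for $S=R/\fc$, the recognition of this as $K_{R/\fa}$ via $K_S=S$, and the dualization of the resulting sequence over $S$ is exactly the liaison-duality mechanism of \cite{Sch}.

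Two points you dismiss as ``bookkeeping'' deserve to be made explicit, though both are standard. First, the dimension count: from the bare definition $\fa=\fc:\fb$, $\fb=\fc:\fa$ one gets that $\fa$ and $\fb$ are unmixed of height $\Ht\fc$ (for instance because $R/\fa=R/(\fc:\fb)$ embeds into $\Hom_S(\fb/\fc,S)$, whose associated primes lie in $\Ass S$, and $S$ is Cohen--Macaulay, hence unmixed and equidimensional); this is what legitimizes $\dim_S R/\fa=d$ and the formula $K_{R/\fa}=\Hom_S(R/\fa,S)$ (here the change-of-rings isomorphism $\Ext^{\,n-d}_R(M,R)\cong\Hom_S(M,S)$ makes the passage from the presentation over $R$ to the one over $S$ harmless). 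Second, in part (b) your claim that the dualized map $S\to\Hom_S(K_{R/\fa},S)$ ``recovers the tautological surjection'' onto $R/\fa$ is not automatic from the mere existence of an isomorphism $\Hom_S(K_{R/\fa},S)\cong R/\fa$: you must use the \emph{natural} biduality map $R/\fa\to\Hom_S(\Hom_S(R/\fa,S),S)$, which sends $1$ to evaluation at $1$, i.e. to the inclusion $K_{R/\fa}\hookrightarrow S$; since $R/\fa$ is maximal Cohen--Macaulay over the Gorenstein ring $S$, this natural map is an isomorphism, and only then does surjectivity of $S\to\Hom_S(K_{R/\fa},S)$, hence $\Ext^1_S(R/\fb,S)=0$, follow. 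With that repaired reading, your final step --- $\Ext^i_S(R/\fb,S)=0$ for all $i>0$ forces $R/\fb$ maximal Cohen--Macaulay, by local duality or Auslander--Bridger --- is sound, and depth is indeed insensitive to viewing $R/\fb$ over $S$ or over $R$.
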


One of the classical and long-standing problems in linkage theory is
to determine what kind of certain properties of ideals which are
preserved under linkage. The main purpose of this paper is to give
answer to the following question.

\begin{question} \label{q}
Suppose that $(R,\fm)$ is a Gorenstein local ring, and $\fa$ and
$\fb$ are two linked ideals of $R$. When do the equalities $\depth
R/\fa=\depth R/\fb$ and $\cd(\fa,R)=\cd(\fb,R)$ hold?
\end{question}

Schenzel \cite{Sch2} has studied the formal local cohomology modules
$\ \mathfrak{F}^i_{\fa}(M):={\vpl}_nH^i_{\fm}(M/\fa^n M)$ for a
finitely generated module $M$ over a local ring $(R,\fm)$ and $i \in
\mathbb {Z}$. It is called $i$th formal local cohomology of $M$ with
respect to $\fa$. In the case of a Gorenstein local ring
\begin{equation}\label{formal}
\mathfrak{F}^i_{\fa}(R)\cong\Hom_R(H^{\dim R-i}_{\fa}(R),E),
\end{equation}
 where $E$ is the injective hull of $R/\fm$ (cf. \cite[Remark
3.6]{Sch2}). For more information on these kind of modules and their
properties we refrer the reader to \cite{A-D} and \cite{E}.

The first non-vanishing value of $\mathfrak{F}^i_{\fa}( M)$ is
called the formal grade as follows
$$\fgrade(\fa,M) := \inf \{i \in \mathbb {Z} : \mathfrak{F}^i_{\fa}( M) \neq 0\}.$$
In case $(R,\fm)$ is a Cohen-Macaulay local ring, then the equality
\begin{equation}\label{asdiv}
\cd(\fa,R)= \dim R- \fgrade (\fa,R)
\end{equation}
holds, (cf. \cite[Corollary 4.2]{A-D}).

Recently formal local cohomology has been used as a technical tool
to solve some problems, see for instance \cite{E2}. In this paper we
use it to give information on the cohomological dimension of an
ideal.

The outline of the paper is as follows. In Section $2$, we consider
the cohomological dimension of an ideal and a partial reverse
statement of a result of Peskin-Szpiro. In Section $3$, we give some
answers to the Question \ref{q} sorted below.

{\bf For the equality of cohomological dimensions:}
  \begin{itemize}
  \item There is an example of a regular local ring possessing two
  Cohen-Macaulay linked ideals while their cohomological dimensions
  are not the same (cf. Example \ref{minor}).
  \item For a regular local ring of positive characteristic
  with Cohen-Macaulay linked ideals, we give an affirmative answer (cf. Theorem \ref{cd}(a)).
  \item Another positive answer happens for evenly linked squarefree monomial
  ideals in the ring $k[x_1,...,x_n]_{(x_1,...,x_n)}$ (cf. Theorem \ref{cd}(c)).
\end{itemize}

{\bf For the equality of depths:}
  \begin{itemize}
  \item In a Gorenstein local ring, the depths of residue class rings
  of evenly linked ideals are the same(cf. Corollary \ref{depthelink}).
  \item We give negative answers in Corollary \ref{1} and Corollary
  \ref{2}.
  \item In a Gorenstein local ring, every 2-dimensional
  non Cohen-Macaulay linked ideals provide an affirmative answer (cf. Corollary \ref{3}).
\end{itemize}

\section{Some remarks on cohomological dimension}

Throughout this section, we assume that $(R,\fm)$ is a local ring
and $\fa$ is an ideal of $R$.  The first non-vanishing cohomological
degree of the local cohomology modules $H^i_{\fa}(R)$ is well
understood. It is the common length of maximal $R$-regular sequences
in $\fa$. The last non-vanishing amount of local cohomology modules,
instead, is more mysterious. In the light of Grothendieck's
Vanishing Theorem, $\cd(\fa,R) \leq \dim R$ and the equality happens
when $\fa$ is
 an $\fm$-primary ideal. In the following we characterize the last
 non-vanishing amount of local cohomology modules.

\begin{proposition} \label{characterize}
Let $\fa$ be an ideal of a  local ring $R$ and $N$ be a finite
$R$-module. Let $t$ be a positive integer. Then the following are
equivalent:
\begin{enumerate}
\item[(a)] $ H^{i}_{\fa}(N)=0$ for all $i > t$,
\item[(b)] $ H^{i}_{\fa}(N)$ is finite for all $i > t$,
\item[(c)] $\fa \subseteq \Rad(0:_R H^i_{\fa}(N))$ for all $i > t$.
\end{enumerate}
\end{proposition}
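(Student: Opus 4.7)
The implications $(a)\Rightarrow(b)\Rightarrow(c)$ are direct. $(a)\Rightarrow(b)$ is trivial since the zero module is finitely generated. For $(b)\Rightarrow(c)$, recall that $H^i_{\fa}(N)$ is always $\fa$-torsion; if it is finitely generated, then each of its finitely many generators is annihilated by some power of $\fa$, and taking the maximum exponent gives $\fa^n H^i_{\fa}(N)=0$, which is exactly $(c)$.

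The real content is $(c)\Rightarrow(a)$. My plan is a descending induction on $i$. For $i>\dim N$ Grothendieck's Vanishing Theorem gives $H^i_{\fa}(N)=0$ for free, providing a base. Assuming $H^j_{\fa}(N)=0$ for all $j>i$ and $\fa^n H^i_{\fa}(N)=0$ (with $i>t\geq 1$), the task is to show $H^i_{\fa}(N)=0$. Everything reduces to the key lemma: \emph{if $c\geq 1$ is the largest index with $H^c_{\fa}(N)\neq 0$ and $\fa^n H^c_{\fa}(N)=0$ for some $n$, then $H^c_{\fa}(N)=0$.}

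To prove the lemma I would argue by contradiction, building a ``dimension-dropping'' sequence $N=N^{(0)},N^{(1)},\ldots$. First I replace $N$ by $N/\Gamma_{\fa}(N)$: since $\Gamma_{\fa}(N)$ is finitely generated and $\fa$-torsion it is killed by some $\fa^m$, whence $H^j_{\fa}(\Gamma_{\fa}(N))=0$ for $j\geq 1$ and the long exact sequence gives $H^c_{\fa}(N/\Gamma_{\fa}(N))\cong H^c_{\fa}(N)$, preserving top-vanishing and $(c)$. So I may assume $\Gamma_{\fa}(N)=0$, and then $\fa$ contains an $N$-regular element $x$. The sequence $0\to N\xrightarrow{x^n} N\to N/x^nN\to 0$ produces, in degree $c$,
\[
H^c_{\fa}(N)\xrightarrow{x^n}H^c_{\fa}(N)\to H^c_{\fa}(N/x^nN)\to H^{c+1}_{\fa}(N)=0,
\]
and the first map vanishes because $x^n\in\fa^n$ annihilates $H^c_{\fa}(N)$; hence $H^c_{\fa}(N/x^nN)\cong H^c_{\fa}(N)\neq 0$, higher cohomology of $N/x^nN$ vanishes by the same sequence, and $\dim(N/x^nN)=\dim N-1$.

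Iterating---at each step quotienting by $\Gamma_{\fa}$ to restore the regular-element hypothesis---produces finitely generated modules $N^{(k)}$ with $\dim N^{(k)}\leq\dim N-k$ and $H^c_{\fa}(N^{(k)})\cong H^c_{\fa}(N)\neq 0$. After $k=\dim N-c+1$ steps one has $\dim N^{(k)}<c$, and Grothendieck's Vanishing forces $H^c_{\fa}(N^{(k)})=0$, contradicting $H^c_{\fa}(N)\neq 0$. The main obstacle I foresee is the bookkeeping in this iteration: one must verify at each stage that finite generation, top-vanishing at $c$, $\fa^n$-annihilation of $H^c_{\fa}(N^{(k)})$, and $\Gamma_{\fa}(N^{(k)})=0$ are all simultaneously preserved, and that a fresh $\fa$-regular element on $N^{(k)}$ can always be produced.
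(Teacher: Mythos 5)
Your proof is correct, and the bookkeeping you flag at the end does all go through, but your route is organized genuinely differently from the paper's. The paper proves $(c)\Rightarrow(a)$ by \emph{ascending induction on $\dim N$}: after reducing to $H^0_{\fa}(N)=0$, it picks an $N$-regular element $r\in\fa$, applies the induction hypothesis to $N/rN$ to get $H^i_{\fa}(N/rN)=0$ for \emph{all} $i>t$ at once, reads off from the long exact sequence that $H^i_{\fa}(N)=rH^i_{\fa}(N)$, and concludes by the nilpotence $r^uH^i_{\fa}(N)=0$ supplied by (c). You instead isolate only the top nonvanishing degree $c$ and run a dimension-dropping descent, with a different key lemma (the top nonvanishing local cohomology in degree $c\geq 1$ of a finite module cannot be killed by a power of $\fa$ --- your phrasing of it as ``then $H^c_{\fa}(N)=0$'' is self-contradictory on its face, but is a legitimate reductio). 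The ingredients are the same (quotient by $\Gamma_{\fa}$, regular element in $\fa$, long exact sequence, the annihilation hypothesis), but the trade-offs differ: your trick of raising the regular element to the power $n$ makes multiplication by $x^n$ literally zero on $H^c_{\fa}(N)$, so the long exact sequence yields an \emph{isomorphism} $H^c_{\fa}(N/x^nN)\cong H^c_{\fa}(N)$ rather than the paper's surjectivity-plus-nilpotence step, and this isomorphism transports the $\fa^n$-annihilation and the vanishing above $c$ to each $N^{(k)}$ for free. That makes explicit a point the paper's proof leaves tacit: to invoke its induction hypothesis on $N/rN$ one must first verify condition (c) for $N/rN$, which requires squeezing $H^i_{\fa}(N/rN)$ between two modules annihilated by powers of $\fa$ in the long exact sequence. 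Conversely, the paper's induction is more economical, disposing of all degrees $>t$ in one stroke, whereas you must iterate in a single degree and track several properties through the iteration; the one small point worth adding to your writeup is that the iteration never stalls, since $H^c_{\fa}(N^{(k)})\neq 0$ with $c\geq 1$ forces $N^{(k)}/\Gamma_{\fa}(N^{(k)})\neq 0$, which guarantees the fresh $\fa$-regular element you ask for.
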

\begin{proof}
The implications $(a) \Longrightarrow (b)$ and $(b) \Longrightarrow
(c)$ are clear. In order to prove $(c) \Longrightarrow (a)$, we
argue by induction on $d:= \dim N$. For $d=0$, the claim is clear.
Now assume that $d>0$ and $\fa \subseteq \Rad(0:_R H^i_{\fa}(N))$
for all $i > t$. We are going to show that $ H^{i}_{\fa}(N)=0$ for
all $i > t$. First note that $H^{i}_{\fa}(N/H^{0}_{\fa}(N)) \cong
H^{i}_{\fa}(N)$ for all $i>t(>0)$ (cf. \cite{Br-Sh}). So, we may
assume that $H^{0}_{\fa}(N)=0$. Then there exists an $N$-regular
element $r$ in $\fa$. Consider  the short exact sequence
$$0 \rightarrow N \stackrel{r}{\rightarrow} N \rightarrow N/rN \rightarrow 0.$$
which implies the following long exact sequence
\begin{equation}\label{les}
\cdots\rightarrow H^{i}_{\fa}(N) \stackrel{r}{\rightarrow}
H^{i}_{\fa}(N) \rightarrow H^{i}_{\fa}(N/rN) \rightarrow
H^{i+1}_{\fa}(N)
 \rightarrow\cdots
\end{equation}

 for all $i$. In the light of \ref{les} and the induction
 assumption, one can deduce that $H^{i}_{\fa}(N/rN)=0$ for all $i>t$
 since $\dim N/rN=d-1$. Again using \ref{les}, we have $H^{i}_{\fa}(N)=r
H^{i}_{\fa}(N)$ for all $i>t$. As $\fa \subseteq \Rad (0:_R
H^{i}_{\fa}(N))$, one has $r^u H^{i}_{\fa}(N)=0$ for some integer
$u$, and so $ H^{i}_{\fa}(N)=0$ for all $i > t$.
\end{proof}

It should be noted that it has been shown in \cite[Proposition
3.1]{Y} that statements (a) and (b) in Proposition
\ref{characterize} are equivalent.

In the light of Proposition \ref{characterize}, we have
\begin{eqnarray*}
  \cd(\fa,R)\!\!\!\! &=&\!\!\!\! \sup\{i\mid H^{i}_{\fa}(R)\  \text{ is not finite}\}  \\
            \!\!\!\! &=&\!\!\!\! \sup\{i\mid \fa \not\subseteq \Rad(0:_R
             H^i_{\fa}(R))\}.
\end{eqnarray*}

In their valuable paper, Peskine-Szpiro (cf. \cite[Chap. IV]{P-S})
have proved the following result.

\begin{remark} \label{3.3}
Suppose $(R,\fm)$ is a regular local ring of positive characteristic
and $t$ is a positive integer. If $\depth R/\fa \geq t$, then
$\cd(\fa,R) \leq \dim R-t$.
\end{remark}

\begin{proof} The claim is clear by virtue
of \cite[Remark 3.1]{E2} and the equality (\ref{asdiv}).
\end{proof}

Next, we prove a slightly reverse statement of the above mentioned
result of Peskin-Szpiro.

\begin{lemma} \label{rev}
Let $R=k[x_1,\ldots,x_n]$ be a polynomial ring in $n$ variables over
a field $k$ with maximal ideal $\fm:=(x_1,\ldots,x_n)$ and $\fa$ be
a squarefree monomial ideal of $R$. Assume that $t$ is a
non-negative integer. If $\cd(\fa,R) \leq n-t$, then  $\depth R/\fa
\geq t$.
\end{lemma}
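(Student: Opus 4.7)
The plan is to combine two classical inputs available specifically for squarefree monomial ideals in a polynomial ring over a field: Lyubeznik's identity relating cohomological dimension to projective dimension, and the Auslander--Buchsbaum formula. Once both are in hand, only a one-line arithmetic remains.

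First I would invoke Lyubeznik's theorem, which asserts that for any squarefree monomial ideal $\fa$ in $R = k[x_1,\ldots,x_n]$ the equality
\[
\cd(\fa, R) \;=\; \pd_R(R/\fa)
\]
holds in arbitrary characteristic. This rests on the combinatorial structure of the Taylor (equivalently, \v{C}ech on $x_1,\ldots,x_n$) complex, whose length simultaneously controls the projective dimension of $R/\fa$ and the vanishing range of the local cohomology modules $H^{i}_{\fa}(R)$.

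Next, localizing at $\fm$ if desired, $R$ is regular of dimension $n$ and $R/\fa$ has finite projective dimension. The (graded) Auslander--Buchsbaum formula then yields
\[
\pd_R(R/\fa) + \depth(R/\fa) \;=\; \depth R \;=\; n,
\]
where depth is taken with respect to the graded maximal ideal $\fm$. Combining this identity with Lyubeznik's theorem and the hypothesis $\cd(\fa,R) \leq n-t$ gives
\[
\depth(R/\fa) \;=\; n - \pd_R(R/\fa) \;=\; n - \cd(\fa, R) \;\geq\; n - (n-t) \;=\; t,
\]
which is exactly the required inequality.

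The real obstacle is recognizing that Lyubeznik's identity is available at all: without it one has no a priori way to turn information about $\cd(\fa,R)$ into information about $\pd_R(R/\fa)$, and indeed it is precisely this extra structural feature of squarefree monomial ideals that lets the lemma serve as a partial converse to Remark \ref{3.3} (now valid in every characteristic). Once Lyubeznik's theorem is cited, the rest is purely formal.
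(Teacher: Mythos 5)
Your proof is correct, and it reaches the conclusion by a genuinely different route than the paper does. The paper never mentions projective dimension: after noting that localization at $\fm$ changes nothing for these graded modules, it converts the hypothesis $H^{i}_{\fa}(R)=0$ for all $i>n-t$ into $\mathfrak{F}^{j}_{\fa}(R)=0$ for all $j<t$ via the Gorenstein duality (\ref{formal}), and then quotes \cite[Corollary 4.2]{E2}, which asserts $\depth R/\fa=\fgrade(\fa,R)$ for squarefree monomial ideals. You instead quote Lyubeznik's theorem $\cd(\fa,R)=\pd_R(R/\fa)$ (valid for squarefree monomial ideals in any characteristic) and finish with Auslander--Buchsbaum, obtaining the sharper equality $\depth R/\fa=n-\cd(\fa,R)$, of which the lemma is an immediate consequence. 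In substance the two arguments are dual packagings of the same fact: since $\fgrade(\fa,R)=n-\cd(\fa,R)$ by (\ref{asdiv}) and $\depth R/\fa=n-\pd_R(R/\fa)$ by Auslander--Buchsbaum, the identity $\depth R/\fa=\fgrade(\fa,R)$ from \cite{E2} is precisely Lyubeznik's $\cd=\pd$ translated into the language of formal local cohomology. What your route buys: it rests on a classical and widely cited theorem rather than on the author's own preprint, it avoids formal local cohomology entirely, and it makes the characteristic-independence transparent; what the paper's route buys is uniformity with its overall theme, since the same formal-grade machinery also drives Corollary \ref{correv} and Theorem \ref{cd}. One caution about your write-up: the parenthetical justification of Lyubeznik's identity via the Taylor complex is heuristic at best --- the Taylor complex is generally non-minimal, so its length only bounds $\pd_R(R/\fa)$ from above, and the equality $\cd(\fa,R)=\pd_R(R/\fa)$ requires Lyubeznik's actual argument (an induction on the number of monomial generators) or, alternatively, Terai's and Hochster's formulas combined with Alexander duality --- so the identity should stand as a citation, which is how you in fact use it.
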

\begin{proof}
Since each of the modules is graded, so the issue of vanishing is
unchanged under localization at $\fm$. Suppose that $\cd(\fa,R) \leq
n-t$. This means $H^i_{\fa}(R)=0$ for all $i>n-t$. Hence, by
(\ref{formal}) we have $\mathfrak{F}^{j}_{\fa}(R)=0$ for all $j<t$.
So by virtue of \cite[Corollary 4.2]{E2}, we have $\depth
R/\fa=\fgrade(\fa,R)\geq t$.
\end{proof}

\begin{corollary} \label{correv}
Suppose that $R=k[x_1, \ldots, x_n]_{(x_1, \ldots, x_n)}$
  is a polynomial ring over a field $k$ of positive
  characteristic localized at $(x_1, \ldots, x_n)$ with $\fa$ is a
  squarefree monomial ideal. Then
\begin{enumerate}
   \item[(a)] If $\cd(\fa,R) \leq \cd(\fb,R)$, then
   $\depth R/\fa \geq \depth R/\fb$ for any ideal $\fb$ of
  $R$.
    \item[(b)] If $\depth R/\fa \geq \depth R/\fb$,
    then $\cd(\fa,R) \leq \cd(\fb,R)$ for any squarefree monomial ideal
   $\fb$ of $R$.
   \item[(c)] $\cd(\fa,R)=\cd(\fb,R)$
  if and only if $\depth R/\fa=\depth R/\fb$ for any squarefree monomial ideal
   $\fb$ of $R$.
\end{enumerate}
\end{corollary}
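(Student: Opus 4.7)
The plan is to observe that Lemma~\ref{rev} and Remark~\ref{3.3} fit together perfectly for squarefree monomial ideals in positive characteristic. Lemma~\ref{rev} gives the implication $\cd(\fa,R)\le n-t \Rightarrow \depth R/\fa \ge t$ for any squarefree monomial $\fa$ (regardless of characteristic), while Remark~\ref{3.3} gives the converse implication $\depth R/\fa \ge t \Rightarrow \cd(\fa,R)\le n-t$ for any ideal in a regular local ring of positive characteristic. Applied in both directions with $t=n-\cd(\fa,R)$ and $t=\depth R/\fa$, respectively, this yields the clean equality
\begin{equation*}
\depth R/\fa \;=\; n - \cd(\fa,R)
\end{equation*}
whenever $\fa$ is a squarefree monomial ideal in $R=k[x_1,\ldots,x_n]_{(x_1,\ldots,x_n)}$ with $\operatorname{char}k>0$. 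Establishing this identity first is the natural opening move.

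Granting this identity, part (a) falls out by sandwiching: if $\cd(\fa,R)\le \cd(\fb,R)$, then applying the identity to the squarefree monomial ideal $\fa$ gives $\depth R/\fa = n-\cd(\fa,R) \ge n-\cd(\fb,R)$, and applying Peskine--Szpiro (Remark~\ref{3.3}) to the arbitrary ideal $\fb$ yields $n-\cd(\fb,R)\ge \depth R/\fb$. Chaining these gives $\depth R/\fa \ge \depth R/\fb$, as required. For part (b), since now both $\fa$ and $\fb$ are squarefree monomial, the identity applies to both, so $\depth R/\fa \ge \depth R/\fb$ is literally equivalent to $n-\cd(\fa,R)\ge n-\cd(\fb,R)$, i.e.\ $\cd(\fa,R)\le \cd(\fb,R)$. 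Part (c) is then just (b) applied in both directions, so it requires no separate argument.

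There is essentially no single hard step in this proof: the two ingredients are already in hand. The only point that deserves a cautionary word is the implicit use of the hypotheses in each direction. In part (a) the squarefree monomial hypothesis is needed for $\fa$ (to invoke Lemma~\ref{rev}) but not for $\fb$, which is why only Peskine--Szpiro is applied to $\fb$; conversely in part (b) we genuinely need both ideals to be squarefree monomial. The positive characteristic hypothesis is used only through Remark~\ref{3.3}; Lemma~\ref{rev} itself is characteristic-free. Once the equality $\depth R/\fa = n-\cd(\fa,R)$ is stated explicitly as a preliminary, the three parts reduce to a line or two each.
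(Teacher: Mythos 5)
Your proposal is correct and is essentially the paper's own argument: both proofs combine Lemma~\ref{rev} (applied to the squarefree monomial ideal) with the Peskine--Szpiro bound of Remark~\ref{3.3}, which the paper invokes in the equivalent form $n-\cd(\fb,R)=\fgrade(\fb,R)\geq \depth R/\fb$ via the equality (\ref{asdiv}) and \cite[Remark 3.1]{E2}. The only cosmetic differences are that you isolate the identity $\depth R/\fa = n-\cd(\fa,R)$ explicitly at the outset and prove (b) directly, whereas the paper derives the same two inequalities inline and proves (b) by contradiction.
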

\begin{proof}
Assume that $\cd(\fa,R) \leq \cd(\fb,R)$. From Lemma \ref{rev}, the
equality (\ref{asdiv}) and \cite[Remark 3.1]{E2} we have $\depth
R/\fa \geq n-\cd(\fb,R)=\fgrade(\fb,R)\geq \depth R/\fb$. This
proves (a). In order to prove (b), assume that $\depth R/\fa \geq
\depth R/\fb$. To the contrary, suppose that $\cd(\fa,R) >
\cd(\fb,R)$. It implies that $n-\depth R/\fa > \cd(\fb,R)$. This in
conjunction with Lemma \ref{rev} yields $\depth R/\fb > \depth
R/\fa$ which is a contradiction. Now, (c) follows from (a) and (b).
\end{proof}

\section{Stability of
cohomological dimension under linkage}

Let $\fa$ and $\fb$ be ideals of a local ring $(R,\fm)$. In this
section, we try to find out some conditions for the stability of
cohomological dimension under linkage, i.e. $\cd(\fa,R)=\cd(\fb,R)$
whenever $\fa$ is linked to $\fb$. It should be noted that the
cohomological dimension is not preserved under linkage, in general.
To be more precise, consider the next example.

\begin{example}\label{minor}
Let $X=(x_{ij})$ be the generic $4\times3$ matrix and $X'$ be the
result of dropping the first two rows from $X$. Let $R=k[X]_{(X)}$
be a polynomial ring  over a field $k$ localized at the maximal
ideal $(X)$. Put $\fa:=I_3(X)$ (the ideal generated by the 3-minors
of $X$ in $R$) and $\fb:=I_2(X')$ (the ideal generated by the
2-minors of $X'$ in $R$). By virtue of \cite[Proposition 21.24]{Ei}
we see that $\fa$ and $\fb$ are Cohen-Macaulay linked ideals, but
$\cd(\fa,R)=4$ while $\cd(\fb,R)=3$ (cf. \cite[Corollary, pp.~
440]{Bru-Schw}).
\end{example}

According to Corollary \ref{correv} and Corollary 4.1 of \cite{E2},
under certain conditions, we see some relations between
$\cd(\fa,R)$, $\depth R/\fa$ and $\fgrade(\fa,R)$.

One of the technical tools we use to show the stability of
cohomological dimension under linkage is the depth, i.e. we
investigate the equality $\depth R/\fa=\depth R/\fb$. Take into
account that from the stability of depth one can not deduce the
stability of cohomological dimension under linkage, in general, as
the following concrete example shows it.

\begin{example} \label{concrete}
Let $R=k[x_0,x_1,x_2,x_3]$ be a polynomial ring over a field $k$.
Let $\fa=(x_0,x_1)\cap(x_2,x_3)$ be the defining ideal of the union
of the two skew lines in ${\mathbb P}^3$ and
$\fb=(x_0x_3-x_1x_2,x^3_1-x^2_0x_3,x^3_2-x_1x^2_3)$ be the defining
ideal of the  twisted quartic curve in ${\mathbb P}^3$. Then it is
not hard to show that
$$\fa \cap \fb=(x_0x_3-x_1x_2,x_0x_2^2-x_1^2x_3)$$
is a complete intersection. Therefore $\fb$ is linked to $\fa$ by
$\fa \cap \fb$. Using CoCoA \cite{cocoa}, we see that
$\depth(R/\fa)=1=\depth(R/\fb)$. But $\cd(\fa,R)=3$ and
$\cd(\fb,R)=2$.
\end{example}

According to the above examples and what we will see in the sequel,
the cohomological dimension, formal grade and depth are not stable,
in general, under linkage.

\begin{proposition} \label{depthlink}
Suppose that $(R,\fm)$ is a Gorenstein local ring, and $\fa$ and
$\fb$ are two linked ideals of R with $t:=\dim(R/\fa)$. Then
$\depth(R/\fb)=\depth(K_{R/\fa})-1$ provided that $R/\fa$ is not a
Cohen-Macaulay ring.
\end{proposition}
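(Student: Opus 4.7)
The plan is to exploit the short exact sequence from Lemma \ref{Schenzel}(a), namely
$$0 \longrightarrow K_{R/\fa} \longrightarrow R/\fc \longrightarrow R/\fb \longrightarrow 0,$$
and extract $\depth(R/\fb)$ from the depth lemma applied to this sequence.

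First, I would assemble the basic dimension and depth data. Because $\fa$, $\fb$ and $\fc$ are (height-)unmixed of a common height $n-t$ (where $n=\dim R$), all three quotients have Krull dimension $t$. Since $\fc$ is Gorenstein, $R/\fc$ is Cohen--Macaulay, so $\depth(R/\fc)=t$. For the canonical module, one has $\dim K_{R/\fa}=\dim R/\fa=t$, and the classical characterization (Bruns--Herzog) states that $R/\fa$ is Cohen--Macaulay if and only if $K_{R/\fa}$ is a maximal Cohen--Macaulay module over $R/\fa$. Our hypothesis that $R/\fa$ is not Cohen--Macaulay therefore yields the strict inequality
$$a := \depth(K_{R/\fa}) < t.$$

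The core of the proof is a double application of the depth lemma to the Schenzel sequence. Setting $d:=\depth(R/\fb)$, one obtains
\begin{equation*}
a \ge \min\{\depth(R/\fc),\, d+1\} = \min\{t,\, d+1\},
\end{equation*}
and
\begin{equation*}
d \ge \min\{\depth(R/\fc),\, a-1\} = \min\{t,\, a-1\}.
\end{equation*}
Since $a<t$, the first inequality is incompatible with $d+1>a$, forcing $d\le a-1$. Since $a-1<t$ as well, the second inequality reduces to $d\ge a-1$. Combining the two gives $d = a-1$, which is exactly $\depth(R/\fb)=\depth(K_{R/\fa})-1$.

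The only non-routine step is the verification that $a<t$ genuinely uses the non Cohen--Macaulay hypothesis through the canonical-module dichotomy; once that is in hand, the remainder is purely a bookkeeping exercise with the depth lemma, and the strict inequality $a<t$ is precisely what selects the correct branch of the two $\min$'s so that the two-sided bound collapses to an equality.
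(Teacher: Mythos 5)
Your overall strategy is the paper's: apply the depth lemma twice to the Schenzel sequence $0\to K_{R/\fa}\to R/\fc\to R/\fb\to 0$ and make the two $\min$'s collapse. But the step you yourself flag as ``the only non-routine step'' is wrong: it is \emph{not} true that $R/\fa$ non-Cohen--Macaulay forces $\depth(K_{R/\fa})<t$. The equivalence ``$A$ Cohen--Macaulay $\iff$ $K_A$ maximal Cohen--Macaulay'' holds in Bruns--Herzog only in the CM setting where $K_A$ is defined there; in general only the forward direction survives, since $K_A$ always satisfies Serre's condition $(S_2)$ and can perfectly well be maximal Cohen--Macaulay over a non-CM ring. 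The paper itself contains counterexamples to your inequality $a<t$: if $\dim(R/\fa)=2$ and $R/\fa$ is unmixed but not CM (e.g.\ $\fa=(x_0,x_1)\cap(x_2,x_3)$ as in Example \ref{concrete}), then $K_{R/\fa}=\Hom(R/\fa,R/\fx)$ has depth $2=t$ (this computation is exactly the proof of Corollary \ref{3}); likewise in Corollary \ref{2} one has $K_{R/\fa}$ Cohen--Macaulay of dimension $t>2$ while $R/\fa$ is not CM. In any such case $a=t$, and your first inequality $a\ge\min\{t,d+1\}$ becomes vacuous: it no longer yields the upper bound $d\le a-1$, so your argument only proves $d\ge a-1$ and the proposition's equality does not follow.

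The missing idea, which is how the paper closes this gap, is to transfer the non-CM hypothesis across the link: since linkage is symmetric, Lemma \ref{Schenzel}(b) shows that if $R/\fb$ were Cohen--Macaulay then $R/\fa$ would be too; hence $R/\fb$ is not CM, i.e.\ $d+1\le t$. This, rather than any strict bound on $\depth(K_{R/\fa})$, is what selects the branch in the first $\min$, giving $a\ge\min\{t,d+1\}=d+1$. For the second $\min$ one only needs the trivial bound $a\le\dim K_{R/\fa}=t$, so $d\ge\min\{t,a-1\}=a-1$, and the two inequalities combine to $d=a-1$. So your bookkeeping with the depth lemma is fine, but the hypothesis must be fed into the argument through $R/\fb$ (via Lemma \ref{Schenzel}(b)), not through a purported depth defect of the canonical module, which may not exist.
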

\begin{proof}
By the assumption, we have the following exact sequence
$$0\longrightarrow K_{R/\fa}\longrightarrow R/\fc\longrightarrow
R/\fb\longrightarrow0.$$ The above exact sequence together with
\cite[Proposition 1.2.9]{Bru-Her} yield the following inequalities
\begin{eqnarray}
\label{dwa}\depth(K_{R/\fa}) &\geq& \min\{\depth(R/\fc), \depth(R/\fb)+1\}, \\
\label{db}\depth(R/\fb) &\geq& \min\{\depth(K_{R/\fa})-1,
\depth(R/\fc)\}.
\end{eqnarray}
Note that, by Lemma \ref{Schenzel}(b), $R/\fb$ is not
Cohen-Macaulay. So using \ref{dwa}, one can deduce that
$\depth(K_{R/\fa})\geq\min\{t, \depth(R/\fb)+1\}=\depth(R/\fb)+1$.
On the other hand, by \ref{db}, we have
$\depth(R/\fb)\geq\min\{\depth(K_{R/\fa})-1,
t\}=\depth(K_{R/\fa})-1$. Therefore
$\depth(R/\fb)=\depth(K_{R/\fa})-1$.
\end{proof}

\begin{corollary} \label{depthelink}
Suppose that $(R,\fm)$ is a Gorenstein local ring, and $\fa$ and
$\fb$ are evenly linked ideals of R. Then
$\depth(R/\fa)=\depth(R/\fb)$.
\end{corollary}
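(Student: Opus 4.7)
The plan is to reduce to the case of a single double linkage $\fa\sim\fc\sim\fb$ and then split into two cases according to whether $R/\fa$ is Cohen--Macaulay.

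First I would argue that it suffices to treat one double link. By the definition of even linkage, there is a chain $\fa=\fc_0,\fc_1,\ldots,\fc_{2n}=\fb$ with consecutive ideals directly linked. So an induction on $n$ reduces the problem to showing $\depth(R/\fa)=\depth(R/\fb)$ whenever there is a single intermediate ideal $\fc$ with $\fa\sim\fc$ and $\fc\sim\fb$.

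Next I would dispose of the Cohen--Macaulay case. If $R/\fa$ is Cohen--Macaulay, then by Lemma \ref{Schenzel}(b) (applied to each link) $R/\fc$ and $R/\fb$ are Cohen--Macaulay as well. Since two ideals that are directly linked by a Gorenstein ideal $\fd$ both have height $\Ht\fd$, one has $\Ht\fa=\Ht\fc=\Ht\fb$, hence $\dim(R/\fa)=\dim(R/\fb)$, and therefore $\depth(R/\fa)=\dim(R/\fa)=\dim(R/\fb)=\depth(R/\fb)$.

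The main content is the non--Cohen--Macaulay case, and this is where Proposition \ref{depthlink} does all the work. The key remark is that the non--Cohen--Macaulay property is also preserved under linkage: applying the contrapositive of Lemma \ref{Schenzel}(b) (and using the symmetry of linkage), if $R/\fa$ is not Cohen--Macaulay then neither is $R/\fc$ nor $R/\fb$. In particular $R/\fc$ is not Cohen--Macaulay, so Proposition \ref{depthlink} applies to both linked pairs $(\fc,\fa)$ and $(\fc,\fb)$ with $R/\fc$ in the role of the non--Cohen--Macaulay side. This yields
\[
\depth(R/\fa)=\depth(K_{R/\fc})-1=\depth(R/\fb),
\]
which closes the argument.

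The only delicate point is the symmetric use of Proposition \ref{depthlink}: one must notice that the hypothesis is placed on the \emph{source} of the canonical module, not on the ideal whose depth is being computed, and then verify that the middle ideal $\fc$ is indeed non--Cohen--Macaulay under our assumption. Once that observation is in place, the proof is a two--line application of the previous proposition, with no further computation required.
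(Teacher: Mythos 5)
Your proof is correct and is precisely the argument the paper intends: the corollary is stated there without proof as an immediate consequence of Proposition \ref{depthlink}, applied twice through each intermediate ideal of the even-linkage chain exactly as you do. Your version merely makes explicit the two points the paper leaves tacit --- the Cohen--Macaulay case via Lemma \ref{Schenzel}(b) together with the equality of heights of linked ideals, and the propagation of the non-Cohen--Macaulay property to the middle ideal $\fc$ --- both of which you handle correctly.
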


There exists a Buchsbaum quasi-Gorenstein domain $A$ which is a
homomorphic image of a polynomial ring over a field $k$ with 11
indeterminates such that $\dim A=3$ and $\depth A=2$ (cf.
\cite{H-T}). Recall that a local ring $(A,\fn)$ is quasi-Gorenstein
if $H^{\dim A}_{\fn}(A)\cong E_A(A/\fn)$ the injective hull of
$A/\fn$. Now, we have a prime ideal $\fp$ in $R$ (a polynomial ring
over a field $k$ with 11 indeterminates) with $\dim(R/\fp)=3$ and
$\depth(R/\fp)=2$ such that $K_{R/\fp}\cong R/\fp$. Put
$\fb:=\fx:\fp$, where $\fx$ is a maximal regular sequence contained
in $\fp$. Then $\fb$ is linked to $\fp$. But
$$\depth(R/\fb)=\depth(K_{R/\fp})-1=1\neq
2=\depth(R/\fp),$$ where the first equality follows by Proposition
\ref{depthlink}.

This example encouraged us to bring the following corollaries.

\begin{corollary}\label{1}
Suppose that $\fa$ is an unmixed, non Cohen-Macaulay and
quasi-Gorenstein ideal in the Gorenstein ring $(R,\fm)$. Further
assume $\fb$ is an ideal linked to $\fa$. Then
$\depth(R/\fb)=\depth(R/\fa)-1$.
\end{corollary}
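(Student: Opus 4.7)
The plan is to reduce this immediately to Proposition \ref{depthlink}. Since $\fa$ is unmixed, $R/\fa$ is equidimensional and hence its canonical module $K_{R/\fa}$ is determined up to isomorphism by local duality over the Gorenstein ring $R$; combined with the hypothesis that $R/\fa$ is quasi-Gorenstein (that is, $H^{\dim R/\fa}_{\fm}(R/\fa) \cong E_R(R/\fm)$), local duality gives the key identification
\[
K_{R/\fa} \cong R/\fa.
\]
First I would record this isomorphism explicitly, noting that unmixedness is exactly what one needs so that the top local cohomology determines the canonical module (equivalently, that $R/\fa$ has no embedded primes and so the natural map from $R/\fa$ to its $S_2$-ification is the identity).

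Next, because $R/\fa$ is not Cohen-Macaulay by hypothesis, all the hypotheses of Proposition \ref{depthlink} are satisfied for the linked pair $(\fa,\fb)$. Applying that proposition directly yields
\[
\depth(R/\fb) \;=\; \depth(K_{R/\fa}) - 1.
\]
Substituting the isomorphism from the first step then gives $\depth(R/\fb) = \depth(R/\fa) - 1$, which is the claim.

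The only conceptual step worth highlighting is the first one: verifying $K_{R/\fa}\cong R/\fa$ under the stated hypotheses. This is where \emph{unmixed} is being used in an essential way, since without it the quasi-Gorenstein condition on the ring need not force the canonical module to be free of rank one. Once that identification is in hand, the corollary is a formal consequence of Proposition \ref{depthlink} and requires no further work.
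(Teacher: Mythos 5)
Your proposal is correct and is exactly the route the paper intends: the quasi-Gorenstein hypothesis yields $K_{R/\fa}\cong R/\fa$, and Proposition \ref{depthlink} (applicable because $R/\fa$ is not Cohen--Macaulay by hypothesis) gives $\depth(R/\fb)=\depth(K_{R/\fa})-1=\depth(R/\fa)-1$, which is precisely how the paper's discussion of the Hermann--Trung example runs the argument just before stating the corollary. One small caveat on your closing emphasis: with the paper's definition of quasi-Gorenstein ($H^{\dim R/\fa}_{\fm}(R/\fa)\cong E$), Matlis and local duality give $\widehat{K_{R/\fa}}\cong\widehat{R/\fa}$ and hence $K_{R/\fa}\cong R/\fa$ without invoking unmixedness at all (indeed unmixedness follows, since $\Ass K_{R/\fa}=\Assh(R/\fa)$), so the unmixed hypothesis is really there to make the linkage setup for $\fa$ sensible rather than being essential to the identification.
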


\begin{corollary}\label{2}
Suppose that $\fa$ is an unmixed, non Cohen-Macaulay and Buchsbaum
ideal in the Gorenstein ring $(R,\fm)$. Suppose that $K_{R/\fa}$ is
Cohen-Macaulay and $\dim(R/\fa)>2$. Further assume that $\fb$ is an
ideal linked to $\fa$. Then $\depth(R/\fb)>1=\depth(R/\fa)$.
\end{corollary}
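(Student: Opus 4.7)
The plan is to split the claim into two parts and prove each separately. The inequality $\depth(R/\fb)>1$ falls out of Proposition \ref{depthlink}: since $K_{R/\fa}$ is Cohen-Macaulay of dimension $d:=\dim(R/\fa)>2$, we have $\depth K_{R/\fa}=d$, and Proposition \ref{depthlink} gives $\depth(R/\fb)=d-1>1$. The real content is the equality $\depth(R/\fa)=1$, which is what the rest of the plan addresses.

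Set $A:=R/\fa$. The key object is the biduality map $\varphi\colon A\to K_{K_A}$. Since $K_A$ is Cohen-Macaulay of dimension $d$, so is $K_{K_A}$. Because $A$ is unmixed, $\varphi$ is injective and its cokernel $C$ satisfies $\dim C\le d-2$ (this is the standard $S_2$-ification picture). Feeding the short exact sequence
$$0\longrightarrow A\longrightarrow K_{K_A}\longrightarrow C\longrightarrow 0$$
into the long exact sequence of local cohomology, and using $H^i_{\fm}(K_{K_A})=0$ for $i<d$, I would read off the isomorphisms
$$H^i_{\fm}(A)\cong H^{i-1}_{\fm}(C)\quad\text{for }1\le i\le d-1.$$

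The next step is to combine these isomorphisms with the Buchsbaum hypothesis $\fm\cdot H^i_{\fm}(A)=0$ for $i<d$: one obtains $\fm\cdot H^j_{\fm}(C)=0$ for $0\le j\le d-2$. Specializing to $j=\dim C$, the top local cohomology $H^{\dim C}_{\fm}(C)$ is Artinian and annihilated by $\fm$, so it is a finite-dimensional $k$-vector space. By local duality over the Gorenstein ring $R$, the Matlis dual of $H^{\dim C}_{\fm}(C)$ is the canonical module $K_C$, which must therefore be of finite length, and this forces $\dim C=0$. It then follows that $H^1_{\fm}(A)\cong H^0_{\fm}(C)=C$, and $C$ is nonzero because $A$ is not Cohen-Macaulay (otherwise $\varphi$ would be an isomorphism). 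Hence $\depth(R/\fa)=1$, as claimed.

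The main obstacle is the implication ``$\fm\cdot H^{\dim C}_{\fm}(C)=0\Rightarrow\dim C=0$'', which relies on both the Artinianness of top local cohomology and the local duality identification $K_C\cong H^{\dim C}_{\fm}(C)^{\vee}$. A secondary point is verifying the dimension bound $\dim C\le d-2$ on the cokernel of the biduality map, which uses the unmixedness of $A$ together with the Cohen-Macaulayness (hence $S_2$) of $K_A$.
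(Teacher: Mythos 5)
Your argument is correct, but for the heart of the statement, the equality $\depth(R/\fa)=1$, you take a genuinely different route from the paper. The paper disposes of that equality by a single citation: by Goto--Shimoda \cite[Theorem 5.4]{G-Sh}, a Buchsbaum ring whose canonical module is Cohen--Macaulay satisfies $H^i_{\fm}(R/\fa)=0$ for $1<i<\dim(R/\fa)$, whence $\depth(R/\fa)=1$ (tacitly using that $H^0_{\fm}(R/\fa)=0$ by unmixedness and $H^1_{\fm}(R/\fa)\neq 0$ by non-Cohen--Macaulayness); the remaining computation $\depth(R/\fb)=\depth(K_{R/\fa})-1=\dim(K_{R/\fa})-1>1$ via Proposition \ref{depthlink} is exactly your first paragraph. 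What you do instead is re-prove the needed special case of the Goto--Shimoda theorem from scratch: writing $A=R/\fa$ and $d=\dim A$, your $S_2$-ification sequence $0\to A\to K_{K_A}\to C\to 0$ with $K_{K_A}$ Cohen--Macaulay of dimension $d$ (valid since over the Gorenstein ring $R$ one has $K_{K_A}=\Ext_R^{\dim R-d}(K_A,R)$ with $K_A$ Cohen--Macaulay) yields $H^i_{\fm}(A)\cong H^{i-1}_{\fm}(C)$ for $1\le i\le d-1$, and the (quasi-)Buchsbaum condition $\fm H^i_{\fm}(A)=0$ for $i<d$ then forces $H^{\dim C}_{\fm}(C)$ to have finite length, hence $\dim C=0$ and $H^1_{\fm}(A)\cong C\neq 0$. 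Each step checks out; the two facts you flag are indeed standard: the injectivity of $A\to K_{K_A}$ with cokernel of dimension at most $d-2$ for unmixed $A$ goes back to Aoyama and appears in the paper's own reference \cite{Sch}, and your local-duality step ($K_C$ of finite length forces $\dim C=0$ since $\dim K_C=\dim C$) is sound, though it can be shortcut: an Artinian module killed by $\fm$ is finitely generated, and $H^{\dim C}_{\fm}(C)$ is never finitely generated when $\dim C>0$. Comparing the two: the paper's proof is two lines but opaque and leaves the endpoints $H^0_{\fm}(A)=0$, $H^1_{\fm}(A)\neq 0$ implicit, while yours is self-contained, supplies those endpoints explicitly (indeed identifies $H^1_{\fm}(A)$ with the $S_2$-defect module $C$), and uses only the weaker quasi-Buchsbaum hypothesis $\fm H^i_{\fm}(A)=0$ for $i<d$ rather than Buchsbaumness, so it proves a slightly more general statement.
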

\begin{proof}
First, note that by \cite[Theorem 5.4]{G-Sh} we have
$H_{\fm}^i(R/\fa)=0$ for all $1< i < \dim(R/\fa)$. Then
$\depth(R/\fa)=1$. On the other hand, one has
$$\depth(R/\fb)=\depth(K_{R/\fa})-1=\dim(K_{R/\fa})-1>1.$$
This completes the proof.
\end{proof}

As a consequence of Proposition \ref{depthlink} we derive the
following result for the equality of depth under linkage. It shows
that the equality of depth in Example \ref{concrete} is not
haphazard.

\begin{corollary}\label{3}
Suppose that $(R,\fm)$ is a Gorenstein local ring, and $\fa$ and
$\fb$ are two linked ideals of R with $\dim(R/\fa)=2$. Then
$\depth(R/\fb)=\depth(R/\fa)=1$ whenever $R/\fa$ is not
Cohen-Macaulay.
\end{corollary}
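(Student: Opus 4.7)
The plan is to exploit Proposition \ref{depthlink} by pinning down both $\depth(R/\fa)$ and $\depth(K_{R/\fa})$ in the two-dimensional case. Note that since $\fa$ and $\fb$ are linked by a Gorenstein ideal $\fc$, they share the same height; in particular $\dim(R/\fb)=\dim(R/\fa)=2$, and by Lemma \ref{Schenzel}(b), $R/\fb$ fails to be Cohen-Macaulay as well.

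First I would compute $\depth(R/\fa)$. Because $\fa$ arises from a linkage (it is of the form $\fc:\fb$ with $\fc$ Gorenstein and $\Ht\fa=\Ht\fc$), $\fa$ is unmixed. Thus every associated prime of $R/\fa$ has height $\Ht\fa$, equivalently $\dim R/\fp=2$ for all $\fp\in\Ass(R/\fa)$; in particular $\fm\notin\Ass(R/\fa)$, so $\depth(R/\fa)\ge1$. Since $R/\fa$ is not Cohen-Macaulay and $\dim(R/\fa)=2$, this forces $\depth(R/\fa)=1$.

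Next I would compute $\depth(K_{R/\fa})$. The canonical module of an unmixed quotient of a Gorenstein local ring satisfies Serre's condition $S_2$, hence
\[
\depth(K_{R/\fa}) \;\ge\; \min\{2,\,\dim K_{R/\fa}\} \;=\; \min\{2,\,2\} \;=\; 2,
\]
and since $\depth(K_{R/\fa})\le\dim K_{R/\fa}=2$ we conclude $\depth(K_{R/\fa})=2$. Plugging this into Proposition \ref{depthlink} yields $\depth(R/\fb)=\depth(K_{R/\fa})-1=1=\depth(R/\fa)$, which is the desired equality.

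There is no real obstacle here: the corollary is essentially the numerical specialization of Proposition \ref{depthlink} when $\dim(R/\fa)=2$. The only subtlety is remembering that the canonical module of an unmixed quotient of a Gorenstein ring automatically satisfies $S_2$, which is precisely what forces $\depth(K_{R/\fa})$ to hit the maximal possible value $2$ in this dimension.
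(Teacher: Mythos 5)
Your proof is correct, and it follows the paper's skeleton (specialize Proposition \ref{depthlink} after showing $\depth(K_{R/\fa})=2$), but both supporting steps are justified along different routes. For $\depth(K_{R/\fa})=2$, the paper does not invoke the $S_2$ property of canonical modules abstractly: it writes $K_{R/\fa}=\Hom(R/\fa,R/\fx)$ for a maximal regular sequence $\fx$ in $\fa$ (so that $R/\fx$ is Gorenstein of dimension $2$) and applies \cite[Exercise 1.4.19]{Bru-Her} to get $\depth(K_{R/\fa})\geq\min\{2,\depth(R/\fx)\}=2$; your appeal to the $S_2$ property of canonical modules is the same fact one level of abstraction up, and in fact it holds without your unmixedness hypothesis, since $\depth K_A\geq\min\{2,\dim A\}$ for any quotient $A$ of a Gorenstein local ring. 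The more genuine divergence is the computation $\depth(R/\fa)=1$: the paper obtains it ``with a similar argument,'' i.e.\ by the symmetry of linkage --- $R/\fb$ is non-Cohen-Macaulay by Lemma \ref{Schenzel}(b), so Proposition \ref{depthlink} applied with the roles of $\fa$ and $\fb$ exchanged gives $\depth(R/\fa)=\depth(K_{R/\fb})-1=1$ --- whereas you argue directly that a linked ideal is unmixed (correct: $\Ass(R/(\fc:\fb))\subseteq\Ass(R/\fc)$ and $R/\fc$ is Cohen-Macaulay, hence equidimensional), so $\fm\notin\Ass(R/\fa)$ forces $\depth(R/\fa)\geq 1$, and non-Cohen-Macaulayness in dimension $2$ pins it at $1$. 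Both arguments are sound; your route makes the unmixedness of linked ideals explicit and avoids a second pass through Proposition \ref{depthlink}, while the paper's symmetry argument needs nothing about associated primes beyond what it has already set up.
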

\begin{proof}
As $K_{R/\fa}=\Hom(R/\fa,R/\fx)$, where $\fx$ is a maximal regular
sequence in $\fa$, then by \cite[Exercise 1.4.19]{Bru-Her} we can
see that
\begin{eqnarray*}
  2&\geq &\depth(K_{R/\fa})\\
   &\geq & \min\{2,\depth(R/\fx)\} \\
   &=& \min\{2,\dim(R/\fx)\}=2.
\end{eqnarray*}
So by Proposition \ref{depthlink} we have $\depth(R/\fb)=1$. With a
similar argument we get $\depth(R/\fa)=1$.
\end{proof}

\begin{theorem} \label{cd}
Suppose that $\fa$ and $\fb$ are ideals of a local ring $(R,\fm)$.
\begin{enumerate}
  \item[(a)] Further assume $(R,\fm)$ is a regular local ring of
  positive characteristic and $\fa$ is linked to $\fb$.
  If $R/\fa$ is a Cohen-Macaulay ring, then $\cd(\fa,R)=\cd(\fb,R)$.
  \item[(b)] Assume that $R=k[x_1, \ldots, x_n]_{(x_1, \ldots, x_n)}$
  is a polynomial ring over a field $k$ of positive
  characteristic localized at $(x_1, \ldots, x_n)$ with $\fa$ is a
  squarefree monomial ideal.
  If $\fa$ is evenly linked to $\fb$, then
  $\cd(\fb,R)\leq\cd(\fa,R)$.
   \item[(c)] Assume that $R=k[x_1,...,x_n]_{(x_1,...,x_n)}$ is a polynomial ring over a
field $k$ localized at $(x_1, \ldots, x_n)$. If $\fa$ and $\fb$ are
two evenly linked squarefree monomial ideals, then one has
$\cd(\fa,R)=\cd(\fb,R)$.
\end{enumerate}
\end{theorem}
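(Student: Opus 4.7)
\emph{Overall approach.} Across all three parts the strategy is to convert the question about $\cd$ into one about $\depth$ via the Cohen-Macaulay formula $\cd(\fa,R)=\dim R-\fgrade(\fa,R)$ of (\ref{asdiv}), and to exploit positive-characteristic relations between $\fgrade$ and $\depth$ in order to transfer the depth equality from Corollary \ref{depthelink} over to an equality of cohomological dimensions.

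\emph{Part (a).} In a regular local ring $R$ of positive characteristic I would sandwich $\fgrade(\fa,R)$ between $\depth(R/\fa)$ and $\dim(R/\fa)$. The Peskine-Szpiro inequality (Remark \ref{3.3}) rewritten via (\ref{asdiv}) gives $\fgrade(\fa,R)\geq \depth(R/\fa)$, while the universal lower bound $\cd(\fa,R)\geq\Ht\fa$ combined with (\ref{asdiv}) gives $\fgrade(\fa,R)\leq\dim(R/\fa)$. The Cohen-Macaulay hypothesis $\depth(R/\fa)=\dim(R/\fa)$ then collapses the sandwich to $\fgrade(\fa,R)=\depth(R/\fa)$. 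Lemma \ref{Schenzel}(b) forces $R/\fb$ to be Cohen-Macaulay as well, and since direct linkage preserves dimension ($\dim R/\fa=\dim R/\fc=\dim R/\fb$), one gets $\depth(R/\fa)=\depth(R/\fb)$; passing back through (\ref{asdiv}) yields $\cd(\fa,R)=\cd(\fb,R)$.

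\emph{Part (b).} Under the positive-characteristic, squarefree-monomial hypotheses on $\fa$, Lemma \ref{rev} supplies $\depth(R/\fa)\geq n-\cd(\fa,R)$ and the Peskine-Szpiro inequality supplies the reverse, so $\cd(\fa,R)=n-\depth(R/\fa)$ exactly. Since $\fa$ is evenly linked to $\fb$, Corollary \ref{depthelink} (which requires no monomial hypothesis) yields $\depth(R/\fa)=\depth(R/\fb)$. Finally, applying Peskine-Szpiro to the general ideal $\fb$ (again, no monomial hypothesis needed on $\fb$) produces $\cd(\fb,R)\leq n-\depth(R/\fb)=n-\depth(R/\fa)=\cd(\fa,R)$.

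\emph{Part (c).} With both $\fa$ and $\fb$ squarefree monomial the depth equality from Corollary \ref{depthelink} feeds directly into Corollary \ref{correv}(c) to give $\cd(\fa,R)=\cd(\fb,R)$; equivalently, the argument of (b) applies with either ideal in the role of the monomial one, yielding the two opposite inequalities. The subtle point I anticipate, and where I would concentrate the main effort, is a characteristic mismatch: the direct route through (b) or Corollary \ref{correv}(c) uses positive characteristic while (c) is stated in arbitrary characteristic. I would handle this by a reduction to a positive-characteristic model, exploiting that $\cd$ and $\depth$ of a squarefree monomial ideal depend only on the associated Stanley-Reisner complex and on the characteristic of $k$, so that both sides of the desired equality are determined by data that already appears in characteristic $p$.
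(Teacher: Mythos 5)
Your parts (a) and (b) are correct and follow essentially the paper's own (terse) route: in (a), Peskine--Szpiro plus the Cohen--Macaulay hypothesis collapses $\cd(\fa,R)$ to $\Ht\fa$, and Lemma \ref{Schenzel}(b) together with preservation of height/dimension under the link finishes; in (b), your chain $\cd(\fa,R)=n-\depth R/\fa$ (squarefree case), $\depth R/\fa=\depth R/\fb$ (Corollary \ref{depthelink}), and $\cd(\fb,R)\leq n-\depth R/\fb$ (Peskine--Szpiro applied to the arbitrary ideal $\fb$) is exactly the published argument.

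The problem is in (c), where you manufacture a difficulty that is not there and then propose a repair with a genuine gap. The paper's proof is just Corollary \ref{depthelink} together with \cite[Theorem 4.2]{E2}, and the point you missed is that the identity $\depth R/\fa=\fgrade(\fa,R)$ for squarefree monomial ideals --- equivalently, via (\ref{asdiv}), $\cd(\fa,R)=n-\depth R/\fa$ --- is characteristic-free: it rests on Lyubeznik's equality $\cd(\fa,R)=\pd_R(R/\fa)$ for monomial ideals (plus Auslander--Buchsbaum), not on Frobenius or Peskine--Szpiro. Indeed the paper's own Lemma \ref{rev} is stated over an arbitrary field and quotes the same \cite[Corollary 4.2]{E2}; only Corollary \ref{correv} carries a positive-characteristic hypothesis, and only because its parts (a) and (b) allow one of the two ideals to be arbitrary, which is where Peskine--Szpiro enters. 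So for (c) one simply applies the characteristic-free $\cd=n-\depth$ to each of $\fa$ and $\fb$ and concludes from the depth equality; no reduction to characteristic $p$ is needed. Your proposed reduction, as sketched, would moreover not go through: $\depth$ and $\cd$ of a Stanley--Reisner ring genuinely depend on $\operatorname{char} k$ (the minimal triangulation of the real projective plane gives a complex that is Cohen--Macaulay exactly when $\operatorname{char} k\neq 2$), so ``determined by the complex and the characteristic'' does not by itself transfer an equality from characteristic $p$ to characteristic $0$; you would need an additional universal-coefficients/torsion argument showing the characteristic-$0$ and characteristic-$p$ values agree for almost all $p$, and if instead you tried to carry the even linkage itself into characteristic $p$, you would face the separate, unaddressed problem of spreading out the entire chain of Gorenstein links modulo $p$.
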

\begin{proof}
(a) Our assumptions in conjunction with \cite[3.1]{E2} prove the assertion.\\
(b) As $\fa$ is a square free monomial ideal, then  \cite[4.2]{E2}
implies that $\depth R/\fa= \fgrade(\fa,R)$. Since  $\fa$ is evenly
linked to $\fb$ then $\depth R/\fa=\depth R/\fb$ (cf. Corollary
\ref{depthelink}). Now,
the claim follows by \cite[3.1]{E2} and the equality (\ref{asdiv}).\\
(c) The claim follows by Corollary \ref{depthelink} and
\cite[Theorem 4.2]{E2}.
\end{proof}

It is noteworthy to say that assumptions in Theorem \ref{cd}(a) are
not too much as we have seen in Example \ref{minor}.

\begin{remark}
Suppose that $\fa$ is an ideal in a regular local ring $(R,\fm)$ of
positive characteristic in a linkage class of a complete
intersection ideal. Immediately, it follows from Theorem \ref{cd}(a)
that $\Ht\fa=\cd(\fa,R)$, that is $\fa$ is a cohomologically
complete intersection ideal.
\end{remark}

\subsection*{Acknowledgment}
The authors are deeply grateful to Professor K. Divaani-Aazar for
careful reading of the first draft of the paper and useful comments. 


\end{document}